\DeclareMathOperator*{\argmin}{argmin}
\DeclareMathOperator*{\argmax}{argmax}
\DeclareMathOperator*{\maximize}{maximise}
\DeclarePairedDelimiter{\group}{(}{)}
\newcommand*{\coacts}{{\mathcal{Y}}}
\newcommand*{\p}{\ensuremath{P}}
\newcommand*{\lp}{\underline{\p}}
\newcommand*{\up}{\overline{\p}}
\newcommand*{\gambles}[1]{\mathcal{G}(#1)}
\renewcommand*{\iff}{\Leftrightarrow}
\newglossaryentry{entryOne}
{
        name=Glossary Entry,
        description={Glossary entries are used to provide definitions for words in your document}
}    
\providecommand{\keywords}[1]
{
  \small	
  \textbf{\textit{Keywords---}} #1
}
\title{Path Planning Problem under non-probabilistic Uncertainty}
\author{Keivan Shariatmadar$^1$\\
{\small $^1$ Department of Computer Science, M-Group, Campus Bruges, KU Leuven, Belgium }
}
\date{\small\it keivan.shariatmadar@kuleuven.be}
\newtheorem{proposition}{Proposition}
\begin{document}
\maketitle

\begin{abstract}
This paper considers theoretical solutions for path planning problems under non-probabilistic uncertainty used in the travel salesman problems under uncertainty. The uncertainty is on the paths between the cities as nodes in a travelling salesman problem. 
There is at least one path between two nodes/stations where the travelling time between the nodes is not precisely known. This could be due to environmental effects like crowdedness (rush period) in the path, the state of the charge of batteries, weather conditions, or considering the safety of the route while travelling. In this work, we consider two different advanced uncertainty models (i) probabilistic--precise uncertain model: Probability distributions and (ii) non-probabilistic---imprecise uncertain model: Intervals. We investigate what theoretical results can be obtained for two different optimality criteria: maximinity and maximality in the travelling salesman problem. 
\end{abstract} \hspace{10pt}

\keywords{automated guided vehicles, travelling salesman problem, maximinity \& maximality, imprecise decision theory, path planning}

\footnotetext{\small\textbf{Abbreviations:} AGV, automated guided vehicles; LP, linear programming; LPUU, LP under uncertainty; TSP, travelling salesman problem; CDF, cumulative distribution function; PDF, probability density function}

\section{Introduction}\label{sec-intro}

Automated guided vehicles or AGVs are mobile robots or autonomous vehicles. They perform transportation tasks in many (industrial) applications to transport goods/materials around large (industrial) buildings, such as a factory or warehouses. For instance, they transport materials/goods from warehouses to material handling in assembly lines, to a pharmacy, to a supermarket, or a bookstore see e.g., \cite{Bostelman2016}. Not only one robot but a fleet of mobile robots cooperate to perform an efficient transport of materials, goods, or products. 
One of the main applications in path planning is the travel salesman problem which is widely used in automated guided vehicle problems. In resource management, the current industrial automated guided vehicle systems operate under known paths or routes to control the whole deterministic fleet of automated guided vehicles. In realistic manufacturing systems, there are usually uncertainties regarding the availability of accessible paths or routes to travel between nodes, and even worse, in the presence of uncertainty, the uncertainty is not always unique and varies (advanced uncertainty). In this work, we consider the travelling salesman problem under an advanced (non)probabilistic uncertainty model. The uncertainty is between the nodes e.g., task locations or charging stations in a factory with at least one automated guided vehicle. 
Since 1954, \cite{ContainerTerminal} AGVs are widely used in logistical environments all over the globe and are now on the rise in a lot of other areas and applications. Current AGV systems--mobile robots--incorporate high-technological features to manage the (whole) fleet of robots in more efficient, safer, and \emph{deterministic} ways. Although, there are some features which are unchanged for years see e.g., \cite{Changjoo2015,Tgal1979,Ward1990,LIN2007,DeRyck2020b}.  
An interesting problem in (deterministic) resource management systems in current industrial systems are the decentralised AGV system. The AGVs perform a specified sequence of transportation tasks between different locations in a known deterministic operating area/paths. The current optimised path planning is based on deterministic paths and routes between nodes or charging stations \cite{Matthias2019}. 
In current manufacturing processes, this has an impact if a path or a route is crowded or not available for a while. For example, the path is crowded or affected by weather conditions. Especially in rush hours, some routes are loaded with other agents like humans/operators, vehicles or other AGVs. In other words, the availability of the paths between the nodes/stations is not deterministic and it is uncertain. 
In this paper, we will represent the Travelling Salesman Problem\cite{Zhang2019,Pecin2017a,Pecin2017b} (TSP)--where there is uncertainty at least in one path between two nodes/stations. The travelling salesman approach is widely used in the AGV problem. We assume there is uncertainty in the travelling time (distance) between the cities. The uncertainty is given in two formats: (i) probabilistic, such as distribution functions, and (ii) non-probabilistic, such as interval (advanced case). We will use an uncertain TSP to illustrate our approaches in both cases. We convert the uncertain TSP to a decision problem via imprecise decision theory which is one of our past works\cite{keivan2020Epsi}. In the advanced uncertainty case, we will give the solutions\footnote{In probabilistic case (i), both maximin and maximal solutions coincide.} under two decision optimality criteria---maximinity and maximality.
 
Generally, the potential applications of this paper can be extended to larger scopes, such as delivery cars, trucks, or autonomous/semi-autonomous (electrical) vehicles where, for instance, those trucks or cars are tasked to travel/move in a path of pickup and drop off points where the path (availability) is uncertain. Our approach can be implemented in these situations in a way that drivers/fleet managers can be advised to choose a better path on their uncertain trajectory to get time-energy optimal paths. In other words, the approach presented here can be implemented in every resource-based transportation system because of its simplicity and generality.
 
The paper is organised as follows. In Section \ref{revise}, a short recap and definitions of \emph{coherent lower and upper previsions}, as well as the \emph{imprecise decision theory} and the current literature on resource management in AGV systems, are given, for more info about imprecise probability theory, see Section. \ref{Annex}. In Section $\ref{opt:utsp}$, the theoretical results are discussed, together with solutions of Uncertain TSP\footnote{The difficulty of the theoretical solutions is reduced to well-posed classical problems by illustrating imprecise decision theory in this paper.}. Section $\ref{epsinum}$ discusses comparisons with reviews of the existing literature to compare our method and results. Section $\ref{sec:concl}$ discusses the conclusions and future work.

\section{Overview and definitions of the theoretical framework}
\label{revise}
This section helps the reader to grasp a quick view of the most important concepts of imprecise probability\footnote{For more details, we propose consulting the reference book \cite{Walley-1991} as well as Section \ref{Annex}.} and uncertain linear programming (LP) problems. To make the concept of coherent lower previsions in imprecise probabilities theory more understandable, a short description will be given in Section. $\ref{cohsec}$. In Section. $\ref{imdecsec}$ the concept of imprecise decision theory is explained. In the next Section \ref{sec:lp} we will talk about LP problems and generic LP under uncertainty (LPUU) problems. 

\subsection{Linear programming problems}\label{sec:lp}
In previous works \cite{keivan2020Epsi,Keivan-1225787,Erik-2090091,Erik-2030199} analytical methodologies and some numerical solutions for the LP under advanced $\epsilon-$contamination, interval, fuzzy sets, and probability-box uncertainty models have been discussed. There are many applications for LP under uncertainty (LPUU) problems, some are discussed by \emph{Dantzig} in Example 2. \cite{Dantzig-1955}, which is usually about finding the \emph{minimum expected cost } for example, the lowest cost of diet in a \emph{Nutrition problem}. Here are some interesting broader applications of optimisation under uncertainty: Optimisation under uncertainty in Artificial Intelligence, Operation of reservoirs, Generation of electrical power, Portfolio selection and optimisation, Inventory management, Facility planning, Stabilisation of mechanisms, Pollution control and Analysis of biological systems \cite{Rockafellar-2001}. In the next two subsections \ref{sec:lpmath} and \ref{sec:lpuumath} we will discuss a mathematical overview of the LP and LPUU problems, respectively. 

\subsubsection{Mathematical overview of LP problems}\label{sec:lpmath}
A standard or canonical LP problem is expressed as follows \cite{Nemhauser-G-L-1989}: 
\begin{align}\label{lpp}
\maximize &\quad c^Tx \notag\\
\text{such that} &\quad Ax\le b,~ x\ge 0
\end{align}
where \(x\in\mathbb{R}^n\) is an optimisation variable, $A\in\mathbb{R}^{m\times n}$ is the coefficient matrix, $c\in\mathbb{R}^n$ is the objective function coefficient vector and $b\in\mathbb{R}^m$ is the constraint vector. 

\subsubsection{Mathematical overview of LPUU problems}\label{sec:lpuumath}
An LPUU problem is a generalisation of the LP problem where at least one of the parameters (coefficients) of the LP problem is uncertain. There is at least one element of the coefficients\rq{} matrices $(c^T, A, \text{~or~} b)$ of the LP problem \eqref{lpp}, which is uncertain or generally \emph{unknown} i.e., they are not deterministic, we do not know the exact values or the values are not known, precisely. For instance, the only information about the coefficient is just boundaries i.e., lower and upper values in an interval, or some probabilistic information.
The challenge lies in optimising an objective linear function in an unknown domain (set). Usually, we do not know exactly whether the problem is feasible or not because the coefficients of the constraints or the goal function are uncertain. In other words, the problem---maximising a linear function over the unknown set (unknown \emph{feasibility space})---is not well-posed. The generic (standard) uncertain linear programming problem is defined in the following form.
\begin{align}\label{spp}
\maximize &\quad U^Tx \notag\\
\text{such that} &\quad Yx\le Z,~ x\ge 0
\end{align}
where \(x\in\mathbb{R}^n\) is a vector of optimisation variables $x_j$, $U$ is a random vector taking variables $u\in\mathbb{R}^n$, the matrices $Y$ and $Z$ are random variables taking values $y\in\mathbb{R}^{m\times n}$ and $z\in\mathbb{R}^m$, respectively\footnote{\label{uncertaintymodel} We assumed that $y_{ij}$, $z_i$ and $u_j$ the elements of $Y$, $Z$ and $U$ are independent. In this paper, we work with the maximisation operator. Since $\min~U^Tx = -\max~-U^Tx$ therefore all results and proves can be applied and held for the minimisation operator as well.}.

\subsection{Imprecise Uncertainty modelling---Coherent lower and upper previsions}
\label{cohsec}
Suppose\footnote{For the general uncertainty models and a tool---expectation operators or probability measures for $(Y,Z)$ in problem \eqref{spp}---we direct the reader to \cite{Walley-1991}. The uncertainty models---probability measures, intervals, and so on---can all be special cases of uncertainty modelling framework: the theory of coherent lower previsions (and imprecise probability theory in general) see, e.g. \cite{Walley-1991, Miranda-2008}.} an unknown variable $(Y,Z)=:V,$ that takes values $(y,z)=:v$ in a set $\mathbb{R}^{m\times n}\times\mathbb{R}^m=:\mathcal{Y}$ and a decision maker (agent) who wants to make decisions about a problem that is a function of $V$. The uncertainty of the agent working with this $V$ is given using an uncertainty model which allows him to do reasoning about $V$ or a function of $V$ and make the decisions in the problem involving $V$. Classical uncertainty models are unique probability distributions. In this case, we call the uncertain variable $V$ a \emph{random variable}. It has been shown \cite{Whittle-1992} that working with expectation tools is the same as working with probability measures or probability distributions. In this paper, the same terminology of Walley \cite{Walley-1991} is used where the expectation operator is called linear prevision/expectation. A linear prevision is a functional $P$ such that \(P:\mathcal{G(Y)}\longrightarrow\mathbb{R}\)
where $\mathcal{G(Y)}$ is a linear space on $\mathcal{Y}$. The linear prevision $P$ satisfies the following three coherence conditions:

\(\text{(Positivity)}\label{pos-P}~P(g)\ge\inf g;~\text{(Homogeneity)}\label{hom-P}~P(\lambda g) =\lambda P(g);~\text{(Additivity)}\label{add-P}~P(g+h)=P(g)+P(h)\) for all bounded real-valued functions $g,h$ in $\mathcal{G(Y)}$ and all $\lambda\in\mathbb{R}$.
The functional $g=g(V)$ is interpreted as a gamble about $V$ and its linear prevision ($P_V$) as a fair price to exchange this gamble, see e.g., \cite{DeFinetti-1975}. An agent (decision maker) is willing to sell the gamble $g(V)$ for any price higher than $P_V(g)$ and buy it for any lower price. For an event, $A\in\mathcal{Y}$ the linear prevision is also denoted by $P$. In this case, $P$ is called probability measure on the set of all events $2^{\mathcal Y}$, i.e., $P:2^{\mathcal{Y}}\longrightarrow[0,1]$. The relationship between them is given by $P(A):=P(I_A)$ where $I_A\in 2^{\mathcal{Y}}$ is the indicator function of $A$---it takes the value $1$ on $A$ and is $0$ otherwise. The three coherence conditions for a probability measure $P$ are:
\(\text{(Positivity)}\label{pos-PM}~P(A)\ge0;~\text{(Unit Norm)}\label{hom-PM}~P(\mathcal Y)=1;~\text{(Additivity)}\label{add-PM}~P(A\cup B)=P(A)+P(B)\) where $A,B\subseteq\mathcal Y,~A\cap B=\emptyset $. For more information on the linear prevision, we refer the reader to \ref{LP-prop}.

\subsection{Imprecise decision making}
\label{imdecsec}
Consider a case that the agent may choose a decision $x$ between several choices, acts, or decisions in a set $\mathcal X:=\mathbb R^n$, the outcome of each decision is uncertain and is a function of the random variable $V$ taking values $v$. For each possible decision $x$ there is a gain (loss) function $G_x$ on $\mathcal Y$, that is, if a decision is made $x$, then the result of this decision has utility $G_x(v)$ where $v$ is the outcome of the random variable $V$.\\
In this paper, we assume that for each decision x there is a corresponding bounded function (gamble) $G_x$ where \(G_x:\mathcal Y\longrightarrow\mathbb R\). 
If the uncertainty about $V$ is described by a coherent lower prevision $\underline P$, then a binary relation in the set $\mathcal X$ of all decisions can be defined as follows: decision $x_1$ is better than decision $x_2$ and we write $x_1\succ x_2$ if and only if the agent is willing to pay some strictly positive prices to exchange $G_{x_1}$ for $G_{x_2}$, that is,
\begin{equation}
x_1\succ x_2\iff\underline P(G_{x_1}-G_{x_2})>0,
\end{equation}
according to the definition, the relation $\succ$ gives us a very useful interpretation, a strict partial order on $\mathcal{X}$. For instance, when the uncertainty about $V$ is represented by a linear prevision $P$ then from the \ref{add-P} property and equation \eqref{l-env} we have
\begin{equation}
x_1\succ x_2\iff\forall P\in\mathcal{M}(\underline P),~ P(G_{x_1})>P(G_{x_2}),
\end{equation}
which means, the action $x_1$ is better than $x_2$ if and only if $x_1$ has a strictly higher expected utility than $x_2$ for all $P$ that dominate the lower prevision $\underline P$ (which is a similar robustness property). In the next section, two decision criteria are described---maximinity and maximality.

\subsubsection{Maximality}
Consider a case that a decision maker seeks decisions $x$---so-called maximal decisions/solutions---that are undominated in pairwise comparison with all other decisions (partial order), i.e., no decision $z$
is considered better than $x$:
\begin{align}\label{maximality}
x~\text{is maximal}&\iff\nexists z\in\mathcal X,~z\succ x\equiv\forall z\in\mathcal{X},~z\not\succ x\notag\\
 &\iff\forall z\in\mathcal{X},~\underline P(G_z-G_x)\le 0\notag\\
&\iff\forall z\in\mathcal{X},~\overline P(G_x-G_z)\ge 0\equiv\inf_{z\in\mathcal X}\overline P(G_x-G_z)\ge 0.
\end{align}

\subsubsection{$\Gamma$-maximin (Maximinity)} 
Maximin solutions derive from worst-case reasoning, i.e., they are the decisions that have the highest lower expected utility (worst-case scenario),
\begin{equation}\label{gama-maxi}
x~\text{is maximin or gamma maximin}\iff x\in\argmax_{z\in\mathcal{X}}\underline P(G_z)
\end{equation}
Similarly, maximaxity solutions---best-case reasoning/scenario---can be found by simply replacing the lower prevision $\underline P$ with the upper prevision $\overline P$ in \eqref{gama-maxi}.

\begin{proposition}\label{prop.1.}
Any maximin solutions are also in the maximal solutions set.
\end{proposition}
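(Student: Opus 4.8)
The plan is to reduce the claim to the characterisation of maximality recorded in \eqref{maximality}, namely that $x$ is maximal if and only if $\underline P(G_z - G_x) \le 0$ for every $z \in \mathcal X$. So, starting from a maximin solution $x \in \argmax_{z \in \mathcal X} \underline P(G_z)$ (such an $x$ being assumed to exist), I would show that this inequality holds against every competitor $z$, and the proposition follows at once.

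The one ingredient needed beyond the definitions is \emph{superadditivity} of a coherent lower prevision, $\underline P(f) + \underline P(g) \le \underline P(f + g)$ for all gambles $f,g$. This is a standard consequence of coherence; alternatively it falls out of the lower-envelope representation $\underline P(f) = \inf_{P \in \mathcal M(\underline P)} P(f)$ in \eqref{l-env}, since each $P$ is additive and $\inf_P\bigl(P(f)+P(g)\bigr) \ge \inf_P P(f) + \inf_P P(g)$. Applying superadditivity with $f = G_z - G_x$ and $g = G_x$ gives
\begin{equation*}
\underline P(G_z - G_x) \;\le\; \underline P(G_z) - \underline P(G_x).
\end{equation*}
By the maximin property $\underline P(G_x) \ge \underline P(G_z)$ for every $z \in \mathcal X$, so the right-hand side is nonpositive; hence $\underline P(G_z - G_x) \le 0$ for all $z$, which is exactly the condition in \eqref{maximality} for $x$ to be maximal. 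Equivalently, the last step can be phrased with the conjugate upper prevision, $\overline P(G_x - G_z) = -\,\underline P(G_z - G_x) \ge 0$, matching the final line of \eqref{maximality}.

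I do not expect a genuine obstacle — the argument is two lines once superadditivity is available — but the point deserving care is that $\underline P(f+g) \ge \underline P(f) + \underline P(g)$ is in general a strict inequality; were it an equality, as it is for a linear (precise) prevision $P$, the two criteria would coincide, which is precisely the situation flagged in the footnote for the probabilistic case. I would also note, for completeness, that nothing beyond the standing hypothesis that a maximin solution exists is required (no attainment assumption on the $\argmax$ is invoked), and that the proof uses nothing specific to the TSP or LPUU setting, only coherence of $\underline P$.
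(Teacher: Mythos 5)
Your proof is correct and is essentially the paper's argument in conjugate form: the paper applies the mixed super-additivity property $\overline P(g+h)\ge\underline P(g)+\overline P(h)$ to $g=G_x$, $h=-G_z$ to get $\overline P(G_x-G_z)\ge\underline P(G_x)-\underline P(G_z)\ge 0$, which is exactly your superadditivity step $\underline P(G_z-G_x)\le\underline P(G_z)-\underline P(G_x)$ rewritten via $\overline P(f)=-\underline P(-f)$. Both proofs then close with the same maximin inequality and the characterisation in \eqref{maximality}, so there is nothing substantive to add.
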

\begin{proof}
 Due to the properties of the upper prevision in Subsection \ref{msadd-lp}, we have:
\[\overline P(G_x-G_z)\ge\underline P(G_x)+\overline P(-G_z)=\underline P(G_x)-\underline P(G_z)\]
\end{proof}
In both the maximinity and maximality criteria, $\argmax_{z\in\mathcal{X}}\underline P(G_z)$ and \(\underset{z\in\mathcal X}{\inf}\overline P(G_x-G_z)\) are functions of $x$ in $\mathcal{X}$, therefore, we need to calculate and find that (i) in maximinity: for which $z\in\mathcal X$ the function---$\underline P(G_z)$---has the highest value, and (ii) in maximality: the function---\(\underset{z\in\mathcal X}{\inf}\overline P(G_x-G_z)\)---is positive or zero\footnote{For further information and details in decision making with imprecise probabilities, we refer to \cite{Troffaes-2007-decision}.}. 

\subsection{Reformulation of LPUU problem as an imprecise decision problem}\label{reformidp}
In Section $\ref{sec-intro}$ we talked about the LP problem \eqref{lpp} which is about maximising a linear function over a (convex) set. By adding uncertainty to the constraints (or the goal function), we have a slightly more difficult problem \eqref{spp} that is not a well-posed problem: maximising a linear objective function over an uncertain set. To obtain a well-posed problem, we reformulate the problem \eqref{spp} to a decision problem under uncertainty, as follows: \\
\underline{First}, we define a utility (gain/loss) function (gamble) $G_x$ for each decision $x\ge 0$, as such: 
\begin{equation}\label{GAIN}
G_x:=(c^Tx-L)I_{Yx\le Z}+L
\end{equation}
$I_{Yx\le Z}(v)$ is an indicator function that is equal to one if $x$ is in the feasibility space and is zero when $x$ is infeasible for any realisations $v=(y,z)$ that the random variable $V=(Y,Z)$ assumes. It is obvious that for each decision $x\ge 0$ and any outcome or realisation $(y,z)$ when $x$ is feasible (or equivalently $I_{Yx\le Z}(v)=1$) then we have a reward equal to $c^Tx$ otherwise we have to be punished with real value $L$ (or equivalently $I_{Yx\le Z}(v)=0$). In other words,
\begin{align}
G_x(y,z)=&
\begin{cases}
c^Tx & yx\le z,\notag\\
L & yx\not\le z.
\end{cases}
\end{align}
$L\in\mathbb{R}$ is small enough and is interpreted as a penalty/punishment value for violating the constraints. A real number, strictly smaller than $\inf c^Tx$ (in maximisation) should be chosen to ensure that breaking the constraint is penalised. If there are several penalty values $L_i\in\mathbb{R}$ for each $i$-th constraint, then we can define $L:=\max_{1\le i\le m}~|L_i|$. With this reformulation, it is clear that both the maximums for the objective function $c^Tx$ and the gain function $G_x$ are equivalent.\\
\underline{Second}, in order to consider the uncertainty about $V$ and quantify it, we use an optimality criterion. In this paper, we consider two optimality criteria: maximinity/worst-case reasoning and maximality/partial ordering which we discussed in Section. $\ref{imdecsec}$.

\subsubsection{Set of maximin and maximal solutions for the LPUU problem}
By applying the maximinity criterion in \eqref{gama-maxi} to the gain/loss function defined in \eqref{GAIN} and two Constant additivity and Positive Homogeneity properties of the lower prevision, we find the set of maximin solutions which is given by, 
\begin{align}\label{MAXIMIN}
\argmax_{x\ge 0}\underline P( G_x)=&\argmax_{x\ge 0}\underline P\bigl((c^Tx-L)I_{Yx\le Z}+L\bigr) \notag\\
=&\argmax_{x\ge 0}(c^Tx-L)\underline P(Yx\le Z).
\end{align}
Applying the maximality criterion in \eqref{maximality} to the gain function defined in \eqref{GAIN}, we have that $x\ge 0$ is maximal if and only if
\begin{equation}\label{MAXIMAL}
\inf_{w\ge 0}\overline P\bigl((c^Tx-L)I_{Yx\le Z}-(c^Tw-L)I_{Yw\le Z}\bigr)\ge 0.
\end{equation}
In the next section, we apply these theoretical solutions to a generic TSP and present two sets of optimal solutions. 

\section{Optimal solutions for uncertain TSP}
\label{opt:utsp}
Generally, a TSP is a specific form of the generic LP problem. To model our approach, we will build further on the well-known TSP model given by \cite{TSP} representation. The TSP is a mathematical graph theory problem used in computer science to model real-world specific optimisation problems such as path planning problems. 
\subsection{Mathematical model of uncertain TSP}
\label{sec:TSPModel}
Assume that $n$ selected cities on a salesman's\footnote{The salesman can be interpreted as an AGV or an agent in different domains, as well.} tour are the vertices set (similar to the set $V$ in a graph). The graph's set of edges $E$ corresponds to the different connections/paths between each city. Since the salesman can travel from any city to another, the graph is complete. In other words, there is an edge between every pair of nodes. For each edge $(i,j)$ on the graph, we associate a binary variable $I_{ij}$,
\[I_{ij}=\begin{cases}
1\quad (i,j)\in E\\
0\quad \text{else}.
\end{cases}\]
Since in the TSP, the edges are undirected then $x_{ij}=x_{ji}$ and it suffices to only include edges with $i<j$ in the problem (mathematical model), as follows.
A salesman has the task of visiting $n$ cities $x_{1},\dots,x_{n}$ starting from an origin city $x_{1}$. The salesman does not know (is uncertain about) all the distances between each city. He has some probabilistic or interval uncertainty models about the distances. He has to visit every city once and has an uncertain distance matrix $U_D=\big[u^d_{ij}\big]$. $U_D$ is random matrix taken values $u^d_{ij}$--the uncertain distance from city $i$ to city $j$. He has to visit all cities once in such a way that the total travelled time is minimised. After he visited all the cities, he has to return to the starting city $x_{1}$. The cost function $g_u(x)$ and the constraint in the TSP can be formulated like an LPUU problem as follows:
\vspace{-1mm}
\begin{align}\label{utsp}
\min_x g_u(x) :=  \min_x\quad &\sum_{i=1}^{n-1} (u^t_{x_{i},x_{i+1}}.I_{ij}) + u^t_{x_{n},x_{1}}\notag\\
\text{such that}\,&\sum_{j\in V}I_{ij}=2,\notag\\ 
&I_{ij}\in\{0,1\},
\end{align}
where, ${x} = (x_{1},\dots,x_{n})$ represents a whole tour which contains every city $x_{i}$ once, and
$u^t_{x_{i},x_{i+1}}$ is the uncertain travelled time\footnote{$u^t_{x_{n},x_{1}}$ is the uncertain travelled time between the last city $x_{n}$ and the starting city $x_{1}$.} between city $x_{i}$ and city $x_{i+1}$. For simplicity, a constant speed $v_{c}$ is assumed, and consequently, the uncertain travel time $u^t_{x_{i},x_{i+1}}$ is defined as follows:  
\begin{equation}
u^t_{x_{i},x_{i+1}} := \frac{u^d_{x_{i},x_{i+1}}}{v_{c}}
\end{equation}
Based on the problem definition in \eqref{utsp}, $g_u$ is the uncertain total time travelled to visit every city once when following the tour ${x}$. The goal of the problem is to find a tour ${x}$ that minimises this objective function $g_u$. This optimised tour ${x}$ contains cities and stations that are optimally chosen for the salesman/AGV to visit. The AGV starts with an already optimised tour\footnote{optimal tour by optimal inserting extra cities (optimal insertion of charging stations and the time it will charge at these stations.) into optimal tour if the robot is not able to finish the tour without charge.} which is received from a central or decentralised task allocation algorithm. This task allocation algorithm outputs a TSP-optimised tour containing all the locations where AGV has to visit (to pick up or drop off a load). Our aim here is to give a classical TSP where the uncertainty of the uncertain TSP \eqref{utsp}, is eliminated. From Section  \ref{reformidp}, we reformulate the problem \ref{utsp} to a decision problem as follows,

\begin{equation}
G_x(u):=\begin{cases}
g_u(x)\quad \sum_{j\in V}I_{ij}=2\\
L\qquad \text{otherwise}
\end{cases}\quad\equiv\qquad\;\big(g_u(x)-L\big)I_{\big\{\sum_{j\in V}I_{ij}=2\big\}}+L
\end{equation}
where $L<\min_{(x,u)} g_u(x)$ is chosen as the real number (punishment) and $I_{\{\cdot\}}$ is an indicator function on the set $\{\cdot\}$. In the next Sections \ref{prob-tsp} and \ref{int-tsp}, we give the maximin and maximal solutions for the uncertain TSP in two uncertainty models---Probability distribution and interval cases. 

\subsection{Probabilistic case---Probability distributions}\label{prob-tsp}
Assume that the uncertainty about the random matrix $U_D=\big[u^d_{ij}\big]$---the durations between two nodes/cities $x_i$ and $x_j$---is given by a probability measure $P$. As explained in Section \ref{opt:lin} and Equation \eqref{LINPRE}, the optimal solutions for the TSP \eqref{utsp} are
\begin{align}\label{maximin-tsp}
\argmin_{x}P(G_x(u))&=\argmin_{x}P\big((g_u(x)-L)I_{\big\{\sum_{j\in V}I_{ij}=2\big\}}\big)\notag\\
&=\argmin_{x}P\big(\big[\sum_{i=1}^{n-1} (u^t_{x_{i},x_{i+1}}.I_{ij}) + u^t_{x_{n},x_{1}}-L\big]I_{\big\{\sum_{j\in V}I_{ij}=2\big\}}\big)\notag\\
&=\argmin_{x}\big[\sum_{i=1}^{n-1} \big(P(u^t_{x_{i},x_{i+1}}).I_{ij}\big) + P(u^t_{x_{n},x_{1}})-L\big]P\big(I_{\big\{\sum_{j\in V}I_{ij}=2\big\}}\big)
\end{align}
where the solution \eqref{maximin-tsp} is a classical (crisp) TSP and it depends on the punishment value $L$.

\subsection{Non-probabilistic case---Intervals}\label{int-tsp}
In this case, for simplicity, we will work with the hypograph model of the linear optimisation problem \eqref{utsp} since the uncertainty is in the goal function. The hypograph model is defined as follows.

\begin{align}\label{hypoutsp}
\min &\quad \sum_{i=1}^{n-1} (u^t_{x_{i},x_{i+1}}.I_{ij}) + u^t_{x_{n},x_{1}}\qquad\equiv\qquad \min\qquad s \notag\\
\text{such that} &\quad \sum_{j\in V}I_{ij}=2,\quad I_{ij}\in\{0,1\}\qquad \text{such that} \quad\sum_{j\in V}I_{ij}=2,\quad I_{ij}\in\{0,1\}\notag\\
&\qquad\qquad\qquad\qquad\qquad\qquad\qquad\qquad\qquad\;\sum_{i=1}^{n-1} (u^t_{x_{i},x_{i+1}}.I_{ij}) + u^t_{x_{n},x_{1}}\le s,\;s\ge 0.
\end{align} 
As it is explained in Section \ref{vacsec}, the uncertainty about $U_D=\big[u^d_{ij}\big]$ is modelled by coherent lower prevision $\underline P_U$ on $A\subseteq\mathbb{R}^n$ for a given $h=h(U)~\text{on}~A$ as, 
\begin{equation}
\underline P_U(h):=\min_{u\in{A}}~h(u)~~\text{equivalently,}~~\overline P_U(h):=\max_{u\in{A}}~h(u)
\end{equation} 
where \(A:=\bigtimes_{k=1}^{n} A_{k};\;A_{k}:=[\underline y_{k},\overline y_{k}].\)

\subsubsection{Maximin Solutions}
 From equation \eqref{maximinsol}, the optimal solutions for the hypograph model \eqref{hypoutsp} are
\begin{align}
\argmax_{x}\underline P(G_x(u))=\argmax_{x\in\underline A}\underline P\big(s\big)
\end{align}
where $\underline A$  is inner feasibility space,
\[\underline A:=\Big\{\sum_{i=1}^{n-1} (\underline u^t_{x_{i},x_{i+1}}.I_{ij}) + \underline u^t_{x_{n},x_{1}}\le s,\;s\ge 0~\wedge~ \sum_{j\in V}I_{ij}=2,\quad I_{ij}\in\{0,1\} \Big\}.\]
where the maximin solution is a classical (crisp TSP) LP problem and it is not dependent on the punishment value $L$.

\subsubsection{Maximal Solutions}
 From equation \eqref{maximalsol}, the optimal solutions for the hypograph model \eqref{hypoutsp} are all $x$'s, in the outer feasibility space $\overline A$,
\[\overline A:=\Big\{\sum_{i=1}^{n-1} (\overline u^t_{x_{i},x_{i+1}}.I_{ij}) + \overline u^t_{x_{n},x_{1}}\le s~\wedge~\sum_{j\in V}I_{ij}=2,\quad I_{ij}\in\{0,1\}~\wedge~s\ge\max(x_m, 0) \Big\},\]
where $x_m$ is the maximin solution. Again here the maximal solutions are not dependent on $L$ and the set $\overline A$ is convex (feasibility space).

\section{Comparison with related literature---alternative existing solutions and examples}
\label{epsinum}
Let\rq{}s now contrast the presented methods with some closely related approaches from the literature on two types of uncertainty models in the TSP and LP problem.
\begin{description}
\item[Changjoo and Dylan]\cite{Changjoo2015} have studied a sensitivity analysis in a multi-robot task allocation problem where the coefficient of the cost function varies between bounded intervals (non-probabilistic case). They gave two algorithms which are exhaustive searches to analyse the sensitivity and not quantify the uncertainty--e.g., in an indeterministic problem, how to make the decision when either availability or traffic on a path is uncertain? 
There are some other works where only the sensitivity of the problem---and not uncertainty quantification in indeterministic problems---is considered \cite{Tgal1979}.
\item[Ward and Richard]\cite{Ward1990} gave approaches to sensitivity analysis in linear programming. Here our method considers reasoning about uncertainty in the coefficient matrix in constraints or the goal function under two different uncertainty models.
\item[Lin and Wen]\cite{LIN2007} wrote about the sensitivity analysis of objective function coefficients of an assignment problem. The focus is only on the special case of an LP problem and only on the sensitivity of the goal function.
\item[Soyster]\cite{Soyster-1973} investigates so-called \emph{inexact linear programming problems:}
\begin{align}
\max &\quad c^Tx \notag\\
\text{such that} &\quad Yx\le b,~x\ge 0
\end{align}
where $c,x\in\mathbb R^n$, $b\in\mathbb R^m$, and the uncertain matrix~$Y$ is interval relative to a convex subset $K:=\bigtimes_{j=1}^n K_j$ of matrices, where each $K_j$ is a convex subset of $\mathbb R^m$. This paper uses the maximinity decision criterion, so this is a special case of Section.~$\ref{maximin-vacuous}$ where, in Equation~\eqref{maxisol-vac} we have now concluded that $\underline{K}=\{x\ge0: \overline yx\le b\}$, where $\overline y$ is the matrix with components $\overline y_{ij}:=\max_{y\in K}y_{ij}$. In Section. $\ref{sec:lpuumath}$ we gave a very general case that the uncertainty can be represented in all coefficients. However, in many optimisation problems involving interval-based uncertainty---formalised by vacuous previsions in this paper---the work above fits in the large body of literature.  Most of the work uses maximinity as the optimality criterion; The results and methods we derive in this paper for the maximality case in Section.~$\ref{maximal-vacuous}$ are novel and make the state of the art richer.
\item[Jamison and Lodwick]\cite{Jamison-Lodwick-2001} worked on \emph{Fuzzy linear programming problems}\footnote{Working with TSP under Fuzzy sets is our current research, however here we emphasise the selection of penalty value $L$ compare to our simple approach.}. Their approach is based on the approach by Bellman and Zadeh \cite{Bellman-Zadeh-1970}. We translate and explain the approach for this discussion relevant to our context. As we present in Section. $\ref{reformidp}$, their idea is to first move the original problem into an unconstrained optimisation problem: \(\maximize~ c^Tx-L_{yx\not\le z}I_{yx\not\le z}\), using, however, a variable penalty factor $L:=L_{yx\not\le z}$, where $L$ is a function of $x$ on $\mathbb R^{n}$ that gives the larger a penalty the more severely the constraint is broken. Then a fuzzy number---a possibility distribution on~$\mathbb R$---$\tilde{G}_{x} := c^Tx-L_{\tilde yx\not\le \tilde z}I_{\tilde yx\not\le\tilde z}(x)$ is associated with every $x$ in $\mathbb R^{n}$ using the extension principle; these can be seen as fuzzy gains. They define the solutions of the problem to be those $x$ with a maximal midpoint average: $\frac{1}{2}\int_0^1\group[\big]{\max G_{x}(t)-\min G_{x}(t)}\text{d}t,$ where $G_{x}(t):=\{\alpha\in\mathbb R:\tilde{G}_{x}(\alpha)\ge t\}$ is the level set at $t$. This optimality criterion lies qua idea in between maximinity and maximaxity, but not qua execution, as how we use possibility distributions to express uncertainty differs markedly.
\end{description}

\section{Conclusions}\label{sec:concl}
In this paper, we looked at a general uncertain TSP with uncertain available paths/routes between cities/stations and investigated how uncertainty can be dealt with parameters in the constraints as well as in the goal function. We have presented theoretical solutions for the generic uncertain TSP with probability distributions and interval uncertainty models. We have discussed the way to reformulate the generic uncertain TSP into a well-posed decision problem according to the probabilistic and interval cases. We have then modelled the uncertainty by using coherent lower (and upper) previsions and proposed theoretical solutions for the decision problem in two typical optimality criteria---maximinity and maximality.\\
By solving the decision problems we have obtained $(i)$ in maximinity: classical linear optimisation problems (without uncertainty), and $(ii)$ in maximality: classical (convex) feasibility problems. Using maximinity results is a less complicated mathematical problem concerning the maximality criterion. In summary,
\vspace{1mm}
\begin{center}
{
	 \begin{tabular}{c|c|c}
	  \textbf Model &  \textit{Maximin theoretical solutions} & \textit{Maximal theoretical solutions} \\
	\hline 
	\multirow{1}{*}{\textit{Probability distributions}}  & classical LP problem & classical LP problem \\	
	\hline
	\multirow{1}{*}{\textit{Interval}}  & classical LP problem & convex feasibility problem  	
	\end{tabular}
	}
	\end{center}\vspace{1mm}
To calculate or implement these theoretical solutions under any numerical applications, any classical linear optimisation problem solver or feasibility problem solver could be used as follows
\vspace{1mm}
\begin{center}
{
	 \begin{tabular}{c|c|c}
	  \textbf Model &  \textit{Maximin calculation} & \textit{Maximal calculation} \\
	\hline 
	\multirow{1}{*}{\textit{Probability distributions}}  & any LP solver & any LP solver \\
	\hline
	\multirow{1}{*}{\textit{Interval}}  & any LP solver & any convex feasibility space solver  
	\end{tabular}
	}
	\end{center}\vspace{1mm}
The paper proposes a way to solve uncertain AGV systems in maximality and maximinity criteria for both uncertainty models. Our next intent is to work towards efficient algorithms (computer codes) to implement numerical solutions to our theoretical results and compare the complexity with the existing algorithms as well as to extend our approaches to the multi-AGV systems and more advanced uncertainty models such as Fuzzy sets, or Probability Box.

\section*{Acknowledgments}
This work is supported by the M-Group, at campus Bruges--KU Leuven. The author thanks all support from Matthias De Ryck. 

\subsection*{Financial disclosure}
MATLAB Licence (Number: 919019 for Version: 9.5.0.1067069 (R2018b) Update 4)

\subsection*{Conflict of interest}
The authors declare that they have no potential conflict of interest.

\section*{Supporting information}
None reported.

\medskip

\printglossary
\printbibliography 

@ARTICLE{Troffaes-2007-decision,
  author = {Troffaes, M. C. M.},
  title = {Decision making under uncertainty using imprecise probabilities},
  journal = {Int. J. Approx. Reason.},
  year = {2007},
  volume = {45},
  pages = {17--29},
  doi = {10.1016/j.ijar.2006.06.001}
}

@article{keivan2020Epsi,
author = {Shariatmadar, Keivan and De Ryck, Matthias and Driesen, Kristof and Debrouwere, Frederik and Versteyhe, Mark},
title = {CMMSE: Linear programming under $\epsilon$-contamination uncertainty},
journal = {Computational and Mathematical Methods},
volume = {2},
number = {2},
pages = {e1077},
keywords = {decision theory, maximality & maximinity, uncertain linear programming, ϵ-contamination},
doi = {https://doi.org/10.1002/cmm4.1077},
url = {https://onlinelibrary.wiley.com/doi/abs/10.1002/cmm4.1077},
eprint = {https://onlinelibrary.wiley.com/doi/pdf/10.1002/cmm4.1077},
note = {e1077 cmm4.1077},
abstract = {This paper considers a constrained optimization problem with at least one element modelled as an ϵ-contamination uncertainty. The uncertainty is expressed in the coefficient matrices of constraints and/or coefficients of goal function. In our previous work, such problems were studied under interval, fuzzy sets, and probability-box uncertainty models. Our aim here is to give theoretical solutions to the problem under another advanced (and informative) ϵ-contamination uncertainty model and generalize the approach to calculate the theoretical solutions for linear cases. The approach is to convert the linear optimization problem under uncertainty to a decision problem using imprecise decision theory where the uncertainty is eliminated. We investigate what theoretical results can be obtained for the $\epsilon$-contamination type of uncertainty model and compare them to the classical case for two different optimality criteria: maximinity and maximality. A numerical example is considered for illustration of the results.},
year = {2020}
}

@book{Walley-1991,
  title={Statistical Reasoning with Imprecise Probabilities},
  author={Walley, P.},
  isbn={9780412286605},
  lccn={lc90042753},
  series={Chapman \& Hall/CRC Monographs on Statistics \& Applied Probability},
  url={https://books.google.be/books?id=Nk9Qons1kHsC},
  year={1991},
  publisher={Taylor \& Francis}
}

@ARTICLE{Miranda-2008,
  author = {Miranda, Enrique},
  title = {A survey of the theory of coherent lower previsions},
  journal = {International Journal of Approximate Reasoning},
  year = {2008},
  pages = {628--658},
  volume = {48}
}

@ARTICLE{Walley-1999,
  author = {Walley, Peter and {d}e Cooman, Gert},
  title = {Coherence of rules for defining conditional possibility},
  journal = {International Journal of Approximate Reasoning},
  year = {1999},
  volume = {21},
  pages = {63--107},
  doi = {10.1016/S0888-613X(99)00007-9}
}

@article{Dantzig-1955,
      title = {Linear Programming under Uncertainty},
     author = {Dantzig, George B.},
     journal = {Management Science},
     volume = {1},
     number = {3/4},
     pages = {197-206},
     url = {http://www.jstor.org/stable/2627159},
     ISSN = {00251909},
     language = {English},
     year = {1955},
     publisher = {INFORMS}
    }

@Article{Soyster-1973,
  title = {Convex Programming with Set-Inclusive Constraints and Applications to Inexact Linear Programming},
  author = {A. L. Soyster},
  journal = {Operations Research},
  pages = {1154--1157},
  volume = {21},
  number = {5},
  month = {9--10},
  year = {1973},
  url = {http://www.jstor.org/stable/168933}
}

@Article{ Bellman-Zadeh-1970,
  title = {Decision-making in a fuzzy environment},
  author = {R. E. Bellman and L. A. Zadeh},
  journal = {Management Science},
  pages = {B-141--B-164},
  volume = {17},
  number = {4},
  year = {1970},
  doi = {10.1287/mnsc.17.4.B141}
}

@Article{ Jamison-Lodwick-2001,
  title = {Fuzzy linear programming using a penalty method},
  author = {K. David Jamison and Weldon A. Lodwick},
  journal = {Fuzzy Sets and Systems},
  pages = {97--110},
  volume = {119},
  number = {1},
  year = {2001},
  doi = {10.1016/S0165-0114(99)00082-2}
}

@article{deCooman20111911,
title = "Independent natural extension",
journal = "Artificial Intelligence",
volume = "175",
number = "12-13",
pages = "1911 - 1950",
year = "2011",
issn = "0004-3702",
doi = "10.1016/j.artint.2011.06.001",
url = "http://www.sciencedirect.com/science/article/pii/S0004370211000737",
author = "Gert de Cooman and Enrique Miranda and Marco Zaffalon",
keywords = "Epistemic irrelevance",
keywords = "Epistemic independence",
keywords = "Independent natural extension",
keywords = "Strong product",
keywords = "Factorisation",
keywords = "Coherent lower previsions"
}

@BOOK{Nemhauser-G-L-1989,
  title = {Handbooks in Operations Research and Management Science, 1: Optimization},
  publisher = {North-Holland/Elsevier},
  year = {1989},
  author = {Nemhauser, G.L. and Rinnooy Kan, A.H.G. and Todd, M.J.},
  location = {Amsterdam},
  edition = {2nd},
     isbn = {978-0-444-87284-5}
}

@BOOK{DeFinetti-1975,
  title = {Theory of Probability},
  publisher = {Wiley},
  year = {1974--1975},
  author = {de Finetti, Bruno},
  note = {2 Volumes}
}

@Book{Whittle-1992,
  title = {Probability via Expectation},
  author = {Peter Whittle},
  edition = {3rd},
  year = {1992},
  publisher={Springer}
}

@inproceedings{Keivan-1225787,
  author       = {Shariatmadar, Keivan and Quaeghebeur, Erik and De Cooman, Gert},
  booktitle    = {ISIPTA '11: Program \& Abstracts},
  keyword      = {maximality,linear programming,maximinity,possibility distribution,vacuous prevision},
  language     = {eng},
  organization = {},
  location     = {Innsbruck, Austria},
  pages        = {31},
  title        = {Linear programming under vacuous and possibilistic uncertainty},
  year         = {2011}
}

@article{Erik-2030199,
  author       = {Quaeghebeur, Erik and Shariatmadar, Keivan and De Cooman, Gert},
  issn         = {0165-0114},
  journal      = {FUZZY SETS AND SYSTEMS},
  keyword      = {Constrained optimization,Maximinity,PROBABILITIES,FUZZY,DECISION-MAKING,BEHAVIORAL-MODEL,NUMERICAL POSSIBILITY THEORY,Maximality,Coherent lower prevision,Linear prevision,Vacuous prevision,Possibility distribution},
  language     = {eng},
  pages        = {74--88},
  title        = {Constrained optimization problems under uncertainty with coherent lower previsions},
  doi          = {10.1016/j.fss.2012.02.004},
  volume       = {206},
  year         = {2012}
}

@article{Erik-2090091,
  author       = {Quaeghebeur, Erik and Huntley, Nathan and Shariatmadar, Keivan and De Cooman, Gert},
  booktitle    = {Communications in Computer and Information Science},
  isbn         = {9783642317170},
  issn         = {1865-0929},
   language     = {eng},
 journal = {IPMU},
  location     = {Catania, Italy},
  pages        = {430--439},
  publisher    = {Springer},
  title        = {Maximin and maximal solutions for linear programming problems with possibilistic uncertainty},
  doi          = {10.1007/978-3-642-31718-7\_45},
  volume       = {4-299},
  year         = {2012}
}

@article{Rockafellar-2001,
  author       = {R. T. Rockafellar},
  journal      = {LECTURE NOTES},
  language     = {eng},
  pages        = {118},
  publisher    = {University of Washington},
  title        = {OPTIMIZATION UNDER UNCERTAINTY},
  year         = {2001}
}

@article{ContainerTerminal,
author = {Szpytko, Janusz},
file = {:C$\backslash$:/Users/Matthias De Ryck/Documents/KU Leuven - Werk/Doctoraat/Artikels/Navigation/Automated Guided Vehicles Navigating Problem in Contrainer terminal.pdf:pdf},
journal = {Logistics and Transport},
keywords = {automated guided vehicles,container terminals,navigating},
number = {13},
pages = {107--116},
title = {{Automated Guided Vehicles Navigating Problem In Container Terminal}},
volume = {2},
year = {2011}
}

@incollection{TSP,
author = {Rajesh Matai and Surya Singh and Murari Lal Mittal},
title = {Traveling Salesman Problem: an Overview of Applications, Formulations, and Solution Approaches},
booktitle = {Traveling Salesman Problem},
publisher = {IntechOpen},
address = {Rijeka},
year = {2010},
editor = {Donald Davendra},
chapter = {1},
doi = {10.5772/12909},
url = {https://doi.org/10.5772/12909}
}

@INPROCEEDINGS{Changjoo2015, 
author={C. {Nam} and D. A. {Shell}}, 
booktitle={2015 IEEE International Conference on Robotics and Automation (ICRA)}, 
title={When to do your own thing: Analysis of cost uncertainties in multi-robot task allocation at run-time}, 
year={2015}, 
volume={}, 
number={}, 
pages={1249-1254}, 
keywords={multi-robot systems;sensitivity analysis;cost uncertainty analysis;multirobot task allocation;sensitivity analysis;multirobot system;global communication reduction;Global communication;Multi-robot systems;Navigation;Robot sensing systems;Sensitivity analysis;Heuristic algorithms}, 
doi={10.1109/ICRA.2015.7139351}, 
ISSN={1050-4729}, 
month={May}
}

@book{Tgal1979,
  title={Postoptimal analyses, parametric programming, and related topics},
  author={G{\'a}l, T.},
  isbn={9780070226791},
  lccn={78040091},
  series={Advanced Book Program - McGraw-Hill Book Company},
  url={https://books.google.be/books?id=8Zl-AAAAIAAJ},
  year={1979},
  publisher={McGraw-Hill International}
}

@INPROCEEDINGS{DeRyck2020b,
author = {{De Ryck}, M. and Versteyhe, M. and Shariatmadar, K.},
booktitle = {2020 6th International Conference on Mechatronics and Robotics Engineering, Barcelona, Spain},
title = {Methodology for a Gradual Migration from a Centralized towards a Decentralized Control in AGV Systems},
publisher = {IEEE Xplore® digital library},
year = {2020}
}

@Article{Ward1990,
author="Ward, James E. and Wendell, Richard E.",
title="Approaches to sensitivity analysis in linear programming",
journal="Annals of Operations Research",
year="1990",
month="Dec",
day="01",
volume="27",
number="1",
pages="3--38",
issn="1572-9338",
doi="10.1007/BF02055188",
url="https://doi.org/10.1007/BF02055188"
}

@article{LIN2007,
author = {Lin, Chi-Jen and Wen, Ue-Pyng},
title = {Sensitivity analysis of objective function coefficients of the assignment problem},
journal = {Asia-Pacific Journal of Operational Research},
volume = {24},
number = {02},
pages = {203-221},
year = {2007},
doi = {10.1142/S0217595907001115}
}

@Article{Matthias2019,
title = {Resource Management in Decentralized Industrial Automated Guided Vehicle Systems},
author = {De Ryck, Matthias and Versteyhe, Mark and Shariatmadar, Keivan},
journal={Journal of Manufacturing Systems},
ISSN = {0278-6125},
pages = {204--214},
volume = {54},
publisher = {Elsevier},
year = {2020}
}

@book{Bostelman2016,
author = {Bostelman, Roger and Messina, Elena},
year = {2016},
month = {04},
title = {Towards Development of an Automated Guided Vehicle Intelligence Level Performance Standard},
isbn = {978-0-8031-7633-1},
publisher = {West Conshohocken, PA: ASTM International},
doi = {10.1520/STP159420150054}
}

@article{Pecin2017a,
author = {Pecin, Diego and Contardo, Claudio and Desaulniers, Guy and Uchoa, Eduardo},
title = {New Enhancements for the Exact Solution of the Vehicle Routing Problem with Time Windows},
journal = {INFORMS Journal on Computing},
volume = {29},
number = {3},
pages = {489-502},
year = {2017},
doi = {10.1287/ijoc.2016.0744},
URL = {https://doi.org/10.1287/ijoc.2016.0744}   
}

@Article{Pecin2017b,
author={Pecin, Diego and Pessoa, Artur and Poggi, Marcus and Uchoa, Eduardo},
title={Improved branch-cut-and-price for capacitated vehicle routing},
journal={Mathematical Programming Computation},
year={2017},
month={Mar},
day={01},
volume={9},
number={1},
pages={61-100},
issn={1867-2957},
doi={10.1007/s12532-016-0108-8},
url={https://doi.org/10.1007/s12532-016-0108-8}
}

@Article{Zhang2019,
author={Zhang, Yu
and Baldacci, Roberto
and Sim, Melvyn
and Tang, Jiafu},
title={Routing optimization with time windows under uncertainty},
journal={Mathematical Programming},
year={2019},
month={May},
day={01},
volume={175},
number={1},
pages={263-305},
issn={1436-4646},
doi={10.1007/s10107-018-1243-y},
url={https://doi.org/10.1007/s10107-018-1243-y}
}
\newpage
\appendix
\section{Annex}
\label{Annex}
In the next two sections, we will give maximin and maximal solutions to the problem \eqref{spp} in two separate uncertainty models---linear previsions (or probability distributions) and vacuous previsions (or intervals).

\subsection{Linear prevision model}
\label{linsec}
Suppose that the uncertainty about the random variables $Y$ and $Z$ is given by cumulative distribution functions $F_Y$ and $F_Z$ respectively, which are (assumed to) be independent and $F_V$ is a joint distribution of the random variable $V=(Y,Z)$, then we can find maximin and maximal solutions as follows.

\subsubsection{Optimal solutions in linear prevision case}\label{opt:lin}
In this model, because of the property \ref{add-P}, the maximin and maximal solutions coincide and can be found by simply replacing the lower probability $\underline P$ in Equation \eqref{MAXIMIN} with a probability measure $P$,
\begin{equation}\label{LINPRE}
\argmax_{x\ge 0}(c^Tx-L)P(Yx\le Z)
\end{equation}
if $Y$ is invertible then we have:
\begin{align}\label{LINPRE}
\argmax_{x\ge 0}(c^Tx-L)P(Yx\le Z)&=\argmax_{x\ge 0}(c^Tx-L)\bigl(1-P(Yx\not\le Z)\bigr)\notag\\
&= \argmax_{x\ge 0}(c^Tx-L)\bigl(1-F_{Y^{-1}Z}(x)\bigr)
\end{align}
where $F$ is the cumulative distribution function. 

\subsubsection{Linear prevision properties}\label{LP-prop}
The conjugate operators $\lp$ and $\up$, defined for all gambles $g$ on $\coacts$ and related by $\up(g)=-\lp(-g)$, are characterised by the following three conditions:\footnote{The assumption that $\lp$ and $\up$ are defined on the whole of $\gambles{\coacts}$ is not a trivial one in general: so-called natural extension from a partial specification requires solving a linear programming problem \cite{Walley-1991}Chapter 3. However, for the cases examined in this paper, natural extension just requires calculating a (Choquet) integral in the most complex case, which is far less computationally demanding.}\\
\(\text{(Positivity)}\label{pos-lp}~\underline P(h)\ge\inf{h}~\text{or}~\overline P(h)\le\sup{h};~\text{(Positive Homogeneity)}\label{hom-lp}~\underline P(\lambda g)=\lambda\underline P(g)\) or \(\overline P(\lambda g)=\lambda\overline P(g);~\\\text{(Super/Sub-additivity)}\label{suadd-lp}~\underline P(g+h)\ge\underline P(g)+\underline P(h)~\text{or}~\overline P(g+h)\le\overline P(g)+\overline P(h)\)
where $g,h\in\mathcal{G(Y)}$ and $\lambda>0$. 

Their restriction to (indicators of) events is called coherent lower and upper probabilities.
In the behavioural interpretation of \cite{Walley-1991}, who follows the lead of \cite{DeFinetti-1975} in this regard, lower and upper previsions for gambles are again seen as prices: respectively the agent's supremum acceptable buying price and infimum acceptable selling price.
When the lower prevision coincides with the upper prevision, they are linear previsions see, e.g., \cite{Walley-1991}. All the uncertainty models we deal with fall into either category, but thanks to this framework, we can treat constrained (linear) optimisation problems with uncertain variables described by both types in the constraint specification in a unified way and with a unified interpretation. It also leaves the door open to similarly deal with problems involving other types of uncertainty models under the coherent lower prevision umbrella.
Lower (and upper) previsions can also be defined for indicators of events that are exactly equivalent to the lower (and upper) probability of the same events, that is, $\underline P(I_A)=\underline P(A)$. In one special case, when the lower prevision coincides with the upper prevision, then the third condition (Super/Sub-additivity) becomes (Additivity) condition and we have linear previsions see, e.g. \cite{Walley-1991}. In addition, it has shown \cite{Walley-1999} that working with a coherent lower prevision $\underline P$ is equivalent to working with a convex closed set of linear previsions (or probabilities) $\mathcal M(\underline P)$, which is set of dominating linear previsions by $\underline P$:
\begin{equation}
\mathcal M(\underline P):=\left\{P:\forall h\in\mathcal{G(Y)},P(h)\ge\underline P(h)\right\}.
\end{equation}
And vice versa, $\underline P$ is the lower envelope of the set $\mathcal M(\underline P)$:
\begin{equation}\label{l-env}
\underline P(h):=\min\left\{P(h):P\in\mathcal M(\underline P)\right\},\quad\forall h\in\mathcal{G(Y)}.
\end{equation} 

There are some other properties for coherent lower (and upper) previsions which are used further on in this paper:\\ 
\(\text{(Constant additivity)}\label{cadd-lp}~\underline P(h+\mu)=\underline P(h)+\mu;~\text{(Mixed super-additivity)}\label{msadd-lp}~\overline P(g+h)\ge \underline P(g)+\overline P(h);~\text{(Factorisation)}\label{factor-lp}~\underline P(f_1f_2)= \underline P\bigl(f_1\underline P(f_2)\bigr) \) 
  where $g,h\in\mathcal{G(Y)},~f_1>0,~f_1$ and $f_2$ are independent see, e.g. \cite{deCooman20111911} and $\mu\in\mathbb{R}$.

\subsection{Vacuous model}
\label{vacsec}
In this case, the uncertainty about $V=(Y,Z)$ is modelled by a coherent lower prevision $\underline P_V$ on $\mathcal{A}:=A\times B\subseteq\mathbb{R}^{m\times n}\times\mathbb{R}^m$ for a given $g=g(V)~\text{on}~\mathcal{A}$ as, 
\begin{equation}\label{maxivac}
\underline P_V(g):=\min_{v\in\mathcal{A}}~g(v)~~\text{equivalently,}~~\overline P_V(g):=\max_{v\in\mathcal{A}}~g(v)
\end{equation} 
where \(A:=\bigtimes_{k=1}^{m} \bigtimes_{l=1}^{n} A_{kl};\;A_{kl}:=[\underline y_{kl},\overline y_{kl}],~ B:=\bigtimes_{k=1}^{m} B_k;\;B_k:=[\underline z_k,\overline z_k].\)

\subsubsection{Maximin solutions in vacuous case}
\label{maximin-vacuous}
By combining these vacuous prevision definitions in \eqref{maxivac} with the Equation \eqref{MAXIMIN}, the maximin solutions become a classical linear programming problem: 
\begin{align}\label{maxisol-vac}
\argmax_{x\ge 0}\underline P( G_x) &= \quad\argmax_{x\ge 0}~\left[(c^Tx-L)\min_{(y,z)\in\mathcal A} I_{yx\le z}(x)\right]\notag\\
&=\hspace{-4mm} \argmax_{\{x\ge 0:\forall (y,z)\in\mathcal A,~yx\le z\}}\hspace{-3mm}c^Tx\notag\\ 
&=: \quad\argmax_{x\in\underline{\mathbb A}}\quad c^Tx
\end{align}
where $\underline{\mathbb A}:=\underset{(y,z)\in\mathcal{A}}{\bigcap} \left\{x\ge 0:yx\le z\right\}
$ is an \emph{inner feasibility space} and is calculated as follows:
\begin{align}
\underline{\mathbb A}:&=\underset{(y,z)\in\mathcal{A}}{\bigcap} \left\{x\in\mathbb{R}^n_{\ge 0}:\sum_{l=1}^ny_{kl}x_l\le z_k,~k=1,2,\dots,m\right\}\notag\\
&=\left\{x\in\mathbb{R}^n_{\ge 0}:(\forall (y,z)\in\mathcal{A})~\sum_{l=1}^ny_{kl}x_l\le z_k,~k=1,2,\dots,m\right\}\notag\\
&=\left\{x\in\mathbb{R}^n_{\ge 0}:\max_{y_{kl}\in A_{kl}}\sum_{l=1}^ny_{kl}x_l\le \min_{z_k\in B_k}z_k,~k=1,2,\dots,m\right\}\notag\\
&=\left\{x\in\mathbb{R}^n_{\ge 0}:\sum_{l=1}^n \overline y_{kl}x_l\le \underline{z_k},~k=1,2,\dots,m\right\}=:\left\{x\in\mathbb{R}^n_{\ge 0}:\overline Yx\le \underline Z\right\}
\end{align}
Thus, the solution, in this case, can be written as:
\begin{align}\label{maximinsol}
\max_{x\in\mathbb{R}^n}& \quad c^Tx\notag\\
\text{such that}&\quad\overline{\,Y}x \le \underline Z,~x\ge 0.
\end{align}
Indeed, if $\underline{\mathbb A}=\emptyset$, then the problem is infeasible.

\subsubsection{Maximal solutions in vacuous case}\label{maximal-vacuous}
We arrange the decision $x\ge 0$ in a (partial) order so that is not dominated by any other decisions $w\ge 0$. To do that, we divide the (decisions) space $(x,w)\in\mathbb{R}^{2n}$ into quadrants:

\begin{wrapfigure}{l}{1mm}
    \begin{tikzpicture}[xscale=1.5,yscale=.6]
    \draw (0,0) rectangle (1,1);
    \node at (.5,.5) {${x^+}\cap{w^-}$};
    \draw (0,0) rectangle (1,-1);
    \node at (.5,-.5) {${x^-}\cap{w^-}$};
    \draw (0,0) rectangle (-1,1);
    \node at (-.5,.5) {${x^+}\cap{w^+}$};
    \draw (0,0) rectangle (-1,-1);
    \node at (-.5,-.5) {${x^-}\cap{w^+}$};
  \end{tikzpicture}
  \end{wrapfigure}
  {
 \begin{align}\label{quadra-def} 
\hspace{3.2cm} x^+:=&\{(y,z)\in\mathcal A:yx\le z,~x\ge0\}\quad\;\;\;\, x^-:=\{(y,z)\in\mathcal A:yx\not\le z,~x\ge0\}\notag\\[3mm]
 \qquad w^+:=&\{(y,z)\in\mathcal A:yw\le z,~w\ge0\}\quad w^-:=\{(y,z)\in\mathcal A:yw\not\le z,~w\ge0\}
 \end{align}}\\
By considering the maximality definition \eqref{MAXIMAL} and all possible non-empty ($2^4-1 = 15$) cases where $\mathcal A$ is relative to these quadrants, we seek an expression for $\overline P(G_x-G_w)$, 
\begin{align*}
  \overline P(G_x-G_w)
  &= \begin{cases}
    c^Tx-L & \mathcal A\cap{x^+}\cap{w^-}\not=\emptyset,\\
    0 &\mathcal A\cap{x^+}=\emptyset\wedge\mathcal A\cap{w^-}\not=\emptyset,\\
    \max\{0,c^Tx-c^Tw\}  &\mathcal A\cap{x^+}\cap{w^-}=\emptyset \wedge\mathcal A\cap{x^+}\not=\emptyset \wedge\mathcal A\cap{w^-}\not=\emptyset,\\
    c^Tx-c^Tw &\mathcal A\cap{x^+}\not=\emptyset \wedge\mathcal A\cap{w^-}=\emptyset,\\
    L-c^Tw &\mathcal A\cap{x^+}=\emptyset \wedge\mathcal A\cap{w^-}=\emptyset.
  \end{cases}
\end{align*}
The first three cases are always non-negative, the fourth one can be positive or negative, and the last one is always negative. Therefore, we consider the last two cases avoiding that $x\ge 0$ is not maximal, for more details see \cite{Keivan-1225787, Erik-2030199}:
\begin{align}
 & \hspace{-4mm}\inf_{w\in\mathbb{R}^n}\overline P(G_x-G_w)\ge0 \notag\\
  \Leftrightarrow  & \underline{\mathbb A}=\emptyset\vee\group{ x\in\overline{\mathbb A}\wedge c^Tx\ge\max_{w\in\underline{\mathbb A}} c^Tw=c^Tx_m}
\end{align}

\hspace{-4mm}where, $x_m$ is the maximin solution and $\overline{\mathbb A}:=\underset{(y,z)\in\mathcal{A}}{\bigcup} \left\{x\ge 0:yx\le z\right\}$ is \emph{outer feasibility space} and is calculated as follows:
\begin{align}
\overline{\mathbb A}:&=\underset{(y,z)\in\mathcal{A}}{\bigcup} \left\{x\in\mathbb{R}^n_{\ge 0}:\sum_{l=1}^ny_{kl}x_l\le z_k,~k=1,2,\dots,m\right\}\notag\\
&=\left\{x\in\mathbb{R}^n_{\ge 0}:(\exists (y,z)\in\mathcal{A}):\sum_{l=1}^ny_{kl}x_l\le z_k,~k=1,2,\dots,m\right\}\notag\\
&=\left\{x\in\mathbb{R}^n_{\ge 0}:\min_{y_{kl}\in A_{kl}}\sum_{l=1}^ny_{kl}x_l\le \max_{z_k\in B_k}z_k,~k=1,2,\dots,m\right\}\notag\\
&=\left\{x\in\mathbb{R}^n_{\ge 0}:\sum_{l=1}^n \underline y_{kl}x_l\le \overline{z_k},~k=1,2,\dots,m\right\}=:\left\{x\in\mathbb{R}^n_{\ge 0}:\underline Yx\le \overline Z\right\}.
\end{align}
Therefore, if $\underline{\mathbb A}\not=\emptyset$, then the maximal solutions become a classical feasibility problem: 
 \begin{equation}\label{maximalsol}
 \left\{x\in\mathbb{R}^n_{\ge 0}:x\in\overline{\mathbb{A}}~\text{and}~c^Tx\ge c^Tx_m\right\}\equiv\left\{x\in\mathbb{R}^n_{\ge 0}:\underline Yx\le \overline Z~\text{and}~c^Tx\ge c^Tx_m \right\}.
\end{equation}
One of the interesting properties of these results is that the solutions in both criteria---maximinity and maximality---do not depend on $L$. 
The uncertainty model, in the incarnations described above, is a special case of the much more general coherent upper and lower previsions e.g., see for details and terminology \cite{Walley-1991,Miranda-2008}.
 
\clearpage


\end{document}